\numberwithin{equation}{section}
\theoremstyle{theorem}
\newtheorem*{theorem*}{Theorem}
\newtheorem{lemma}[equation]{Lemma}
\newtheorem{proposition}[equation]{Proposition}
\newtheorem{corollary}[equation]{Corollary}
\theoremstyle{definition}
\theoremstyle{remark}
\newtheorem{remark}[equation]{Remark}
\newtheorem{example}[equation]{Example}
\newtheorem{question}[equation]{Question}
\newcommand*{\Z}{\mathbb{Z}}
\newcommand*{\R}{\mathbb{R}}
\newcommand*{\C}{\mathbb{C}}
\newcommand*{\sheaf}[1]{{\mathcal #1}}
\newcommand*{\Oo}{\sheaf{O}}
\newcommand*{\LC}{\sheaf{L}}
\newcommand*{\A}{\mathbb{A}}
\renewcommand*{\P}{\mathbb{P}}
\begin{document}

\title{A few computations about the real cycle class map in low dimensions}

\author{Jens Hornbostel}
\address{Jens Hornbostel,  Bergische Universit\"at Wuppertal, Gaußstraße 20, 42119 Wuppertal, Germany}
\email{hornbostel at math.uni-wuppertal.de}
\thanks{Research for this publication was conducted in the framework of the DFG Research Training Group 2240: Algebro-Geometric Methods in Algebra, Arithmetic and Topology.}

\begin{abstract}
We investigate the surjectivity of the real cycle class map from $I$-cohomology to classical intergral cohomology for some real smooth varieties, in particular surfaces. This might be considered as one of several possible incarnations of real integral Hodge theory.
\end{abstract}

\maketitle


\section{Introduction}

Given a smooth projective $d$-dimensional variety over $\C$, the Hodge conjecture predicts that the cycle class map $CH^*(X) \to H^{2*}(X(\C),\Z)$ is surjective when restricting the target $H^{2k}(X(\C),\Z)$ to elements of Hodge type $(k,k)$ and when passing to rational coefficients. With integral coefficients, one still has an epimorphism for $k=0,1$ and $d$, but in general there are non-algebraic integral cohomology classes satisfying the Hodge condition, as first discovered for $k=2$ by \cite{AH62}.

\medskip

For real varieties, there is a classical real cycle class map $CH^*(X)/2 \to H^{*}(X(\R),\Z/2)$ due to Borel and Haefliger \cite{BH61}, which has been extensively studied in real algebraic geometry. Again, one may ask if a given cohomology class is algebraic. However, according e.g. to \cite[Remark III.2.3]{Si89}, already for real surfaces, in the ''generic'' case elements in $H^1(X(\R),\Z/2)$ are not of algebraic orign. 

\medskip

There are two interesting integral lifts of this mod $2$ real cycle class map $c_{BH}$. The first involves a factorization through the $G$-equivariant integral cohomology $H^{2k}_G(X(\C),\Z(k))$ of the topological $G$-space $X(\C)$, where $G=C_2=\Z/2$. When investigating the surjectivity of this map, that is the algebraicity of elements in $H^{2k}_G(X(\C),\Z(k))$, we have the Hodge condition and another condition involving Steenrod squares, discovered by Kahn for chern classes and in general by Krasnov \cite{Kr94}. The surjectivity question for this map and elements satisfying both these conditions has been investigated in great detail in \cite{BW20}, \cite{BW20b}, and similarly to the complex case it sometimes fails for $2 \leq k \leq d-1$. One should also note that this incarnation of real Hodge theory reduces to classical complex Hodge theory when passing to rational coefficients, see \cite[Remark 2.4 (ii)]{BW20}.

\medskip

There is another lift of the Borel-Haefliger map, which from the point of view of motivic homotopy theory is an even better real analogue of the complex cycle class map, and which is due to Jacobson, building on the work of many others. Namely, still for $X$ a real smooth variety of dimension $d$, we have the {\em real cycle class map} $c_{\R}:H^*(X,I^*)\to H^*(X(\R),\Z)$. Here the $I$-cohomology is closely related to the Chow-Witt groups of Barge-Morel and Fasel \cite{BM00}, \cite{faselthesis}. We refer to \cite[Proposition 2.11]{HW19} for the precise statement concerning this relationship. After some initial results of Fasel for $*=d$, the real cycle class map $c_{\R}$ has been studied more systematically by Jacobson \cite{jacobson} and Hornbostel, Wendt, Xie and Zibrowius \cite{HWXZ}. See \cite[beginning of section 3]{HWXZ} for a short summary about different possible constructions resp. descriptions of $c_{\R}$. We will recall a few known facts about $c_{\R}$ in section \ref{recollections} below.
 
\medskip
 
The purpose of this note is to further investigate the surjectivity of the real cycle class map, notably for certain real varieties of small dimension. For instance, for some real surfaces $X$ all elements $H^1(X(\R),\Z)$ will be of algebraic orign, and for others some elements will not. These results are just a very modest start, providing essentially the first explicit computations of the real cycle class map in non-cellular examples and cohomological degree different from $d$. More subtle computations will be certainly carried out by other authors in forthcoming work.  

\medskip

Throughout this article, $X$ always denotes a smooth real variety (i.e. irreducible and reduced, of finite type) of dimension $d$, and $X(\R)$ denotes the set of real points, equipped with the analytic topology. For bigraded $I$-comology we refer to \cite{faselthesis}, and e.g. to \cite{AF16} for a sheaf-theoretic description. We will be mostly interested in groups of bidegrees $H^k(X,I^k)$, as these are the source of the real cycle class map $c_{\R}^k$,  and are closely related (see above) to $\widetilde{CH}^k(X)$. We freely use notation and results from \cite{faselthesis} and \cite{HWXZ}, e.g. when working with twisted coefficients. Recall \cite[Proposition 2.2.27]{Ma20} that the connected components of $X(\R)$ are smooth real manifolds of dimension $d$, and compact if $X$ is projective. General references for real algebraic geometry and cohomology include the books and survey articles of Bochnak-Coste-Roy, Kollar, Mangolte, Silhol, Scheiderer... We also recall that smooth compact topological manifolds have real algebraic models (Nash-Tognoli), but these are highly non-unique, and at least in dimensions $\leq 5$ there is always at least one algebraic model for which the Borel-Haefliger map is surjective \cite[Theorem 11.3.12]{BCR98}.

\medskip

After establishing a slight generalization of Fasel's isomorphism result in top degree to the non-affine case, see Proposition \ref{isok=d}, we concentrate on studying the surjectivity of $c_{\R}$, that is the algebraicity of integral singular cohomology classes. The kernel of $c_{\R}$ will be presumably studied in the forthcoming PhD thesis of Lerbet. Below, we will see that surjectivity of $c_{\R}$ holds in some cases. However, in codimension one, surjectivity already fails for many elliptic curves, see Example \ref{nonconnectedcurves} below. This is in contrast to the complex cycle class map and the target restricted to elements satisfying the Hodge condition (''Lefschetz (1,1)-theorem''). Concerning positive results about the surjectivity of the real cycle class map, we obtain e.g. the following.

\begin{proposition}(see Propositions \ref{K3} and \ref{enriques})
For $X$ a smooth projective real K3 surface, the real cycle class map  $c_{\R}^1:H^1(X,I^1) \to H^1(X(\R),\Z)$ is an epimorphism if and only if the Borel-Haefliger cycle class map $c^1_{BH}$ is an epimorphism. For $X$ a smooth projective real Enriques surface, the real cycle class map  $c_{\R}^1:H^1(X,I^1) \to H^1(X(\R),\Z)$ is always an epimorphism.
\end{proposition}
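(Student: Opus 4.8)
The plan is to reduce both assertions, by a diagram chase, to a single criterion for the surjectivity of $c_\R^1$ phrased purely through the Borel--Haefliger map, and then to read off the two cases from the geometry of the real locus.

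I would start from the short exact sequence of sheaves $0\to I^2\to I^1\to I^1/I^2\to 0$ on $X$, with $I^1/I^2\cong\KM_1/2$, so that $H^1(X,I^1/I^2)=\CH^1(X)/2=\Pic(X)/2$, and observe that its real realisation is the exact sequence of constant sheaves $0\to\Z\xrightarrow{\,2\,}\Z\to\Z/2\to 0$ on $X(\R)$. Applying $c_\R$ and $c_{BH}$ produces a commutative diagram with exact rows
\[
\begin{array}{ccccc}
H^1(X,I^1) & \longrightarrow & \Pic(X)/2 & \overset{\partial}{\longrightarrow} & H^2(X,I^2)\\
\downarrow & & \downarrow & & \downarrow\\
H^1(X(\R),\Z) & \longrightarrow & H^1(X(\R),\Z/2) & \overset{\beta}{\longrightarrow} & H^2(X(\R),\Z)
\end{array}
\]
whose bottom row is the Bockstein sequence of $0\to\Z\xrightarrow{2}\Z\to\Z/2\to 0$, whose vertical maps are $c_\R^1,\ c_{BH}^1,\ c_\R^2$ from left to right, and whose rightmost vertical map is an isomorphism by Proposition~\ref{isok=d}, since $2=\dim X$ is the top degree for the surface $X$. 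Using in addition that $2H^1(X(\R),\Z)\subseteq\operatorname{im}(c_\R^1)$, i.e.\ that $\operatorname{coker}(c_\R^1)$ is annihilated by $2$ (see Section~\ref{recollections}), a short chase finishes the reduction: by injectivity of $c_\R^2$ and the right square, $\ker\partial=(c_{BH}^1)^{-1}(\ker\beta)$; by exactness of the top row and the left square, $\operatorname{im}(c_\R^1)\bmod 2=c_{BH}^1(\ker\partial)=\operatorname{im}(c_{BH}^1)\cap\ker\beta$; and since $\operatorname{im}(c_\R^1)\supseteq 2H^1(X(\R),\Z)$ this means $\operatorname{im}(c_\R^1)$ is the full preimage of $\operatorname{im}(c_{BH}^1)\cap\ker\beta$ under reduction mod $2$. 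Hence $c_\R^1$ is an epimorphism if and only if $\ker\beta\subseteq\operatorname{im}(c_{BH}^1)$, where $\operatorname{im}(c_{BH}^1)$ is exactly the image of $\Pic(X)\to H^1(X(\R),\Z/2)$, $L\mapsto w_1(L(\R))$.

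For a smooth projective real K3 surface $X$ one has $K_X\cong\Oo_X$, hence $w_1(X(\R))=c_{BH}^1(K_X)=0$ and every component of $X(\R)$ is orientable (or $X(\R)=\emptyset$, in which case both maps are trivially onto $0$). Then $H^2(X(\R),\Z)$ is torsion-free, $\beta$ vanishes on $H^1(X(\R),\Z/2)$, so $\ker\beta=H^1(X(\R),\Z/2)$ and the criterion above becomes ``$c_\R^1$ is an epimorphism $\iff c_{BH}^1$ is an epimorphism'', which is the first assertion.

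For a smooth projective real Enriques surface $X$ the second assertion amounts to the unconditional inclusion $\ker\beta\subseteq\operatorname{im}(c_{BH}^1)$. When $X(\R)$ is orientable this is again the surjectivity of $c_{BH}^1$; when $X(\R)$ has non-orientable components — which happens precisely when $c_{BH}^1(K_X)=w_1(X(\R))\ne 0$, using that $K_X$ is $2$-torsion and nontrivial — on each such component, topologically a connected sum $\#^k\RP^2$, the subgroup $\ker\beta$ has index $2$ and is the kernel of $Sq^1=w_1\cup(-)$, so one must still realise every class of $\ker\beta$ by an algebraic divisor. I expect this to be the main obstacle: one has to control $\operatorname{im}(c_{BH}^1)=\operatorname{im}(\Pic(X)\to H^1(X(\R),\Z/2))$ for real Enriques surfaces precisely enough to see that it contains $\ker\beta$. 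The plan is to extract this from the structure theory of real Enriques surfaces — exploiting that on a complex Enriques surface $c_1\colon\Pic(X_\C)\xrightarrow{\ \sim\ }H^2(X_\C,\Z)$, so that all integral classes are algebraic, together with the Galois-equivariant description of $H^1(X(\R),\Z/2)$ and of which divisor classes are defined over $\R$ (cf.\ Degtyarev--Itenberg--Kharlamov on real Enriques surfaces) — or, alternatively, via the real K3 double cover $Y\to X$ and transfer, or via the real curves furnished by a real genus-one fibration on $X$. Granting $\ker\beta\subseteq\operatorname{im}(c_{BH}^1)$, the criterion closes the Enriques case.
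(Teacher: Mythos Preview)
Your reduction and the K3 case are essentially the paper's argument: the paper packages your diagram chase as Lemma~\ref{keylemmarefined} (``$c_\R^{d-1}$ is onto iff $c_{BH}^{d-1}$ hits every class that lifts to $H^{d-1}(X(\R),\Z)$''), and for K3 uses orientability of $X(\R)$ exactly as you do. One small slip: the bound $2H^1(X(\R),\Z)\subseteq\operatorname{im}(c_\R^1)$ does \emph{not} follow from Section~\ref{recollections} alone, which only gives $2^{d-k+1}=4$; you need one more step of the ladder (the $t=d$ diagram, Jacobson stability, and the surjectivity of $H^{d-1}(X,\KM_d/2)\to H^{d-1}(X(\R),\Z/2)$ from \cite{CTS96}). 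The paper does this inside the proofs of Lemmas~\ref{keylemma}/\ref{keylemmarefined} and records it as Corollary~\ref{betterbound}.

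The Enriques case, however, is genuinely incomplete. You correctly reduce to $\ker\beta\subseteq\operatorname{im}(c_{BH}^1)$ and correctly identify $\ker\beta$ on each non-orientable component $V_k$ with the kernel of $w_1\cup(-)$. What you are missing is that this last inclusion is \emph{precisely} the content of Mangolte--van~Hamel \cite[Theorem~1.1 and Theorem~4.4]{MvH98}: for a real Enriques surface, $c_{BH}^1$ is onto iff $X(\R)$ is orientable, and in general a class in $H^1(X(\R),\Z/2)$ lies in $\operatorname{im}(c_{BH}^1)$ iff its cup product with $w_1(X(\R))$ vanishes. Once you quote that, your computation $\ker\beta=\ker(w_1\cup(-))$ on each $V_k$ (together with the orientable components contributing nothing to $w_1$) immediately gives $\ker\beta\subseteq\operatorname{im}(c_{BH}^1)$ and closes the argument. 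Your alternative plans via $\Pic(X_\C)\cong H^2(X_\C,\Z)$, the K3 double cover, or a real elliptic fibration are unnecessary detours; the needed input is already in the literature.
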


Hence for real Enriques surfaces, we obtain algebraicity in codimension one with integral coefficients, although a theorem of Mangolte and van Hamel shows that algebraicity with mod 2 coefficients (i.e. surjectivity of $c^1_{BH})$ fails if $X(\R)$ is not orientable.

When looking at surfaces, note that we usually consider cohomology groups for real smooth {\em projective} varieties, in analogy with the Hodge conjecture in the complex case. When it comes to Euler classes for algebraic vector bundles, computations in the affine case are more relevant, of course.

\medskip

We thank Olivier Wittenberg for many very useful explanations and discussions, as well as comments on an earlier draft, and Jean Fasel for useful discussions and for informing us about work in progress by him and Lerbet. 

\section{A few recollections concerning the real cycle class map}\label{recollections}

Let us recall a few fundamental results from the articles above. Essentially by definition, both $I$-cohomology and singular cohomology are concentrated in degrees $0 \leq k \leq d$. We know that $c_{\R}$ is an isomorphism after inverting $2$, i.e. after tensoring with $\Z[1/2]$, and that $c_{\R}$ factors through an isomorphism $H^*(X,I^{\infty})\cong H^*(X(\R),\Z)$, see \cite[Corollary 8.9]{jacobson}. So in some sense the situation here is quite different from the complex case. Indeed, we currently do not know of any general additional condition for algebraicity of cohomology classes in the real case (but see Remark \ref{realconditions} below). What one can try to do in general is to establish upper bounds for the $2$-primary torsion of the kernel and cokernel, see below for a bound of the cokernel following from Jacobson's work. 
\medskip

Without inverting $2$, the real cycle class map is an isomorphism for cellular varieties by \cite[Theorem 5.7]{HWXZ}. The latter article also shows that $c_{\R}$ commutes with pull-backs, push-forwards, ring structures and localization sequences, and all these results extend to twisted coefficients. (These compatibilites may be considered as an integral refinement of \cite[Theorem 11.3.4 and Theorem 11.3.5]{BCR98}.) Finally, we have an isomorphism of Zariski sheaves $I^n \stackrel{\cong}{\to} I^{n+1} \stackrel{\cong}{\to} ...  \stackrel{\cong}{\to} I^{\infty}$ for $n \geq d+1$ by \cite[proof of Corollary 8.11]{jacobson}. We also recall that Balmer and Walter \cite{BW02} establish purity for Witt groups: there is a natural isomorphism $W(X) \stackrel{\cong}{\to} H^0(X,W)$ if $dim(X) \leq 3$. Also, the obvious example $X=Spec(\C)$ shows that already for $d=0$ the map $c_{\R}$ is not always injective, and sometimes not injective for the trivial reason that $X$ doesn't have real points.   

\medskip

We may summarize some of the above facts as follows: the morphisms of Zariski sheaves $I^t \stackrel{<<-1>>}{\to} I^{t+1}$ and the real cycle class map yield a chain of morphisms 
$$H^k(X,W) \to H^k(X,I^k) \to H^k(X,I^{max(k,d+1)}) \stackrel{\cong}{\to} H^k(X,I^{\infty}) \stackrel{\cong}{\to} H^k(X(\R),\Z)$$
where the remaining morphisms become also isomorphisms after applying $\otimes \Z[1/2]$. Finally, the above results show that any element in $H^k(X(\R),\Z)$ which is a multiple of $2^{d-k+1}$ has a lift to $H^k(X,I^k)$. Hence the cokernel of $c_{\R}^k$ is 2-primary torsion bounded by $2^{d-k+1}$. We improve this bound below if $k=d-1$, see Corollary \ref{betterbound}. There is work in progress on improving this existing bound in general by Jean Fasel and Samuel Lerbet. 

\subsection{The real cycle class map and Chow-Witt groups}

Recall (see e.g. \cite[section 2.4]{HW19}) that the map $\widetilde{CH}^k(X,\LC) \to H^k(X,I^k,\LC)$ is always an epimorphism for all $k$, all smooth varieties $X$ and all line bundles $\LC$. Hence the real cycle class map $c_{\R}^k: H^k(X,I^k,\LC) \to H^k(X(\R),\Z(\LC))$ is an epimorphism if and only if the composition $\widetilde{CH}^k(X,\LC) \to H^k(X(\R),\Z(\LC))$ is an epimorphism. Consequently, all statements below on the surjectivity of $c_{\R}^k$ remain true after replacing $H^k(X,I^k,\LC)$ by $\widetilde{CH}^k(X,\LC)$. Following \cite[section 3.D]{HWXZ}, one might argue that rather than this composition one should study the cycle class map from $\widetilde{CH}^k(X,\LC)$ to the fiber product of $I$-cohomology and a certain subgroup of equivariant cohomology. For this, one would have to combine results from this article and from \cite{BW20}, \cite{BW20b}.

\section{General results for $k=d$ and the B\"ar sequence}

In \cite[Corollaire 16.2.2]{faselthesis}, Fasel proves that for affine $X$, the real cycle class map $c_{\R}: H^d(X,I^d,\omega_{X/\R}) \to H^d(X(\R),\Z(\omega_{X/\R}))$, that is for points resp. 0-cycles, is an epimorphism. In fact, in his description the target is a direct sum of copies of $\Z$ indexed by the compact connected components of $X(\R)$, but this is isomorphic to $H^d(X(\R),\Z(\omega_{X/\R}))$ and his map identifies with $c_{\R}$ under this isomorphism, which can be deduced from the following remark.

\begin{remark} For $A=\Z$, $H^0(-,A)$ counts connected components, e.g. by the universal coefficient theorem because $H_0$ does. If $X(\R)$ is compact and oriented, we have Poincar\'e duality, and thus the same statement holds for $H^d$. More generally, Poincar\'e duality yields a statement about $H^d_c$ with coefficients twisted by the orientation line bundle. This essentially implies that $H^d$ with integral coefficients counts compact connected components. With $\Z/2$-coefficients (hence no twists necessary) this is e.g. stated in the introduction of \cite{CTS96}. Both \cite[chapter 3.3]{hatcher} and \cite[chapters B.5 and B.7]{Ma20} contain a collection of general facts on these matters, but don't really discuss the non-oriented case. A useful reference about $H^d$ and connected components in the compact case is \cite[Corollary VI.7.14]{Br93}: in $H^d(X(\R),\Z)$, every oriented component yields a $\Z$ and every non-oriented component yields a $\Z/2$. See also \cite[Theorem V.9.2]{Br97} and \cite[Theorem VI.8.3]{Br93} for Poincar\'e duality with twisted coefficients in the non-oriented case.
\end{remark}

\begin{remark} One might ask if there is something like Poincar\'e duality for $I$-cohomology, looking e.g. at dualities in the motivic stable motivic homotopy category $SH(\R)$ or for generalized (aka MW-) motives $\widetilde{DM}(\R)$ as introduced by Calm\`es-Fasel and studied in detail in \cite{BCDFO}. Indeed, there are very nice duality results for $\widetilde{DM}(\R)$ in \cite{BCDFO}, see e.g. the equalities on pages xvi and xvii. One could try to use these to deduce computations about $H^0$ from those about $H^d$ for smooth projective $X$ with sufficiently simple tangent/orientation  bundles.
\end{remark}

Fasel's proof of surjectivity obviously extends to quasiprojective and in particular to projective $X$. His argument for showing that his map -- which now has become a special case of the more generally defined $c_{\R}$ -- is well-defined uses a reduction to a classical result by Knebusch on curves. The surjectivity is then obvious, and remains obvious for the other description of $c_{\R}$ as well, as any connected component contains a real point, which by definition has an algebraic lift. Fasel also shows \cite[Th\'eor\`eme 16.3.8]{faselthesis} that $c_{\R}$ is injective in this degree if $X$ is affine and oriented, at least if $dim(X) \geq 2$. Note that the injectivity part of this theorem is wrong for $d=0$, just look at  $X=Spec(\C)$. However, it does also hold for affine curves, and for {\em quasi-projective} varieties with a rational point, see Poposition \ref{isok=d} below. 
As Fasel points out on page 151, the orientability assumption for the surjectivity may be removed after checking that the classical results involved generalize well to twists. Moreover, our injectivity result below also holds for twisted coefficients. For a line bundle $\LC$ on a real variety $X$, we will denote the associated local system on $X(\R)$ by $\Z(\LC)$, and refer to \cite[section 2.E.1]{HWXZ} for further details.

\medskip

\begin{question}
One might also ask the following: Is the cokernel of $c_{\R}$ a birational invariant in certain degrees resp. dimensions? For this, one should study and compare long exact sequences with respect to blow ups for the source and target of $c_{\R}$, compare e.g. \cite[Proposition 2.13]{BW20}, keeping in mind that on the topological side blow ups of points are just connected sums with $\R\P^2$, see \cite[section F.3]{Ma20}. We also note that Feld \cite{Fe22} has recently established birational invariance of Chow-Witt groups for zero cycles, and thus for $H^d(-,I^d)$.
\end{question}

\medskip

We now recall the B\"ar sequence, which will be a crucial ingredient for the following computations. In general, we may try to inductively analyse $c_{\R}=c_{\R}^k:H^k(I^k) \to H^k(X(\R),\Z)$ by using the long exact cohomology sequence (''B\"ar sequence'') associated to the short exact sequence of Zariski sheaves $I^{t+1} \to I^t \to I^t/I^{t+1} = \overline{I}^t\cong K^M_t/2$. Starting such an induction requires information about the cohomology groups $H^k(X,I^t)$ for some $t$, usually bigger than $d$ so that the above stabilization holds. It also requires information about $c_{\R}$ on the mod 2-terms, which for $t=k$ corresponds to the well-studied Borel-Haefliger map, and which can sometimes be understood in other bidegrees as well. We recall that the inclusion $I^{t+1} \to I^t$ corresponds to multiplication with $2$ in classical cohomology, and that $c_{\R}$ commutes with the boundary maps by \cite[Proposition 4.14]{HWXZ}. Hence we obtain a commutative ladder of long exact sequences as follows:

\[ 
\xymatrix{ 
  H^{k-1}(X, K^M_t/2) \ar[d] \ar[r] & H^k(X, I^{t+1}) \ar[d] \ar[r] & H^k(X,I^t) \ar[d] \ar[r] & H^k(X,K^M_t/2) \ar[d] \ar[r] & H^{k+1}(X,I^{t+1}) \ar[d]  \\
  H^{k-1}(X(\R),\Z/2) \ar[r] & H^k(X(\R),\Z) \ar[r] & H^k(X(\R),\Z) \ar[r]& H^k(X(\R),\Z/2)  \ar[r] & H^{k+1}(X(\R),\Z)  \\
} 
\]

All this extends to twisted coefficients, and applies to smooth real varieties $X$ of arbitrary dimension $d$. We now apply this sequence to deduce a slight generalization of \cite[Th\'eor\`eme 16.3.8]{faselthesis}.

\begin{proposition}\label{isok=d} Let $X$ be a smooth real (connective) variety of dimension $d$, and $\LC$ a line bundle on $X$. Then the real cycle class map in degree $d$
$$c_{\R}^d:H^d(X,I^d,\LC)\to H^d(X(\R),\Z(\LC))$$
is an epimorphism. If $X$ is affine or if $X$ has a rational point, $c_{\R}^d$ is an isomorphism.
\end{proposition}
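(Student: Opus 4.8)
The plan is to treat the two assertions separately and begin with surjectivity, which needs no hypothesis on $X$: here I would simply rerun Fasel's argument for $0$-cycles. By the remarks above, $H^d(X(\R),\Z(\LC))$ is a finite direct sum indexed by the compact connected components $M\subseteq X(\R)$, each summand being a copy of $\Z$ (when $\LC|_M$ realises the orientation system of $M$) or of $\Z/2$ (otherwise). For each such $M$ I pick a closed point $p\in M$ with residue field $\R$; it represents a class in $\widetilde{\CH}^d(X,\LC)$, hence, via the surjection $\widetilde{\CH}^d(X,\LC)\twoheadrightarrow H^d(X,I^d,\LC)$, a class in $H^d(X,I^d,\LC)$ whose image under $c_\R^d$ generates the summand attached to $M$. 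That last point is Fasel's computation in \cite[Corollaire 16.2.2]{faselthesis}, once one identifies his $0$-cycle class map with $c_\R^d$, which follows from compatibility with pushforward along closed points. Integral combinations of the $[p]$ then exhaust $H^d(X(\R),\Z(\LC))$, and nothing here uses properness, quasi-projectivity, orientability, or even $X(\R)\neq\varnothing$.

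For injectivity I would reduce to Fasel's \cite[Th\'eor\`eme 16.3.8]{faselthesis}. The case $d=0$ is immediate and pins down the hypothesis: a rational point forces $X=\Spec\R$, where $c_\R^0\colon W(\R)=\Z\to H^0(\point,\Z)=\Z$ is an isomorphism, whereas for $X=\Spec\C$ the map $c_\R^0\colon\Z/2\to0$ is not injective. For $d\geq 1$, Fasel's proof runs by a Bertini-type d\'evissage: through any prescribed real point one cuts out a smooth curve $C\subset X$ and reduces the statement to curves, where Knebusch's description of Witt groups of real curves, together with the B\"ar sequence, identifies $H^1(C,I^1,\LC)$ with $H^1(C(\R),\Z(\LC))$. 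The hypothesis ``$X$ affine, or $X$ has a rational point'' is exactly what lets one choose the Bertini pencil so that $C$ inherits the same feature (an affine curve, resp. a curve with a rational point), for which the curve case is available; the orientability assumption of \cite[Th\'eor\`eme 16.3.8]{faselthesis} is removed by carrying the twisting line bundle $\LC$ through the classical inputs, which Fasel already notes (page~151) is harmless; and one still has to check that Fasel's ad hoc map agrees with the motivic $c_\R^d$, which follows from compatibility of both with the B\"ar boundary maps \cite[Proposition 4.14]{HWXZ} and with pushforwards.

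The B\"ar ladder for $t=d$ is what organises the chase. Given $a\in\ker c_\R^d$, the rightmost square shows its image in $H^d(X,K^M_d/2)=\CH^d(X)/2$ lies in $\ker c_{BH}^d$, and injectivity of the Borel--Haefliger map in top degree — classical, and again the place where affineness or the rational point enters, via the same reduction to curves — gives that $a$ comes from some $b\in H^d(X,I^{d+1},\LC)$. Using the stabilisation $H^d(X,I^{d+1},\LC)\cong H^d(X,I^{\infty},\LC)\cong H^d(X(\R),\Z(\LC))$ and that the inclusion $I^{d+1}\hookrightarrow I^d$ induces multiplication by $2$, one finds that $b$ corresponds to a $2$-torsion class; and since $c_\R$ intertwines the B\"ar boundary $H^{d-1}(X,K^M_d/2)\to H^d(X,I^{d+1},\LC)$ with the integral Bockstein $H^{d-1}(X(\R),\Z/2)\to H^d(X(\R),\Z(\LC))$, whose image is exactly the $2$-torsion, precomposed with $c_\R$ on $H^{d-1}(X,K^M_d/2)$, the class $b$ lifts along the boundary and hence $a=0$.

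I expect the main obstacle to be precisely this last step: the formal diagram chase does not close by itself, and one really needs injectivity of $c_{BH}^d$ together with enough surjectivity of $c_\R$ on the neighbouring mod-$2$ group $H^{d-1}(X,K^M_d/2)$ to absorb the $2$-torsion that appears when $X(\R)$ has non-orientable compact components carrying the ``wrong'' twist — and both of these are exactly what the reduction to curves and Knebusch's theorem provide under the stated hypothesis. The remaining ingredients (the stabilisation $I^{\infty}\cong I^{\geq d+1}$, surjectivity of $c_{BH}^d$, commutativity of the ladder) are already available in the references cited above.
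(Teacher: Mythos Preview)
Your surjectivity argument and the skeleton of the injectivity proof --- the B\"ar ladder at $k=t=d$, reducing via the five lemma to an epimorphism on $H^{d-1}(X,\overline{I}^d)$, a monomorphism on $H^d(X,\overline{I}^d)$, and Jacobson stability for $H^d(X,I^{d+1})$ --- match the paper. The difference is in how you source the two mod-$2$ inputs: you propose a Bertini-type d\'evissage to curves plus Knebusch, and you rightly flag this as the sticking point. The paper bypasses that entirely: both the epimorphism on $H^{d-1}(X,\overline{I}^d)$ and the monomorphism on $H^d(X,\overline{I}^d)$ are read off directly from \cite[Theorem~3.2(d)]{CTS96}, after identifying the Colliot-Th\'el\`ene--Scheiderer sheaf $\mathcal{H}^d$ with $K^M_d/2\cong\overline{I}^d$ via the Milnor conjecture, and that theorem applies precisely under the hypothesis ``$X$ affine or $X$ has a rational point''. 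So your second paragraph is a detour the paper does not take; the single citation to \cite{CTS96} replaces it, sidesteps any concern about whether a Bertini pencil is available in the stated generality (no quasi-projectivity is assumed), and handles all $d$ uniformly without a separate curve argument.
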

\begin{proof}
Surjectivity is obvious as elements in the target are represented by linear combinations of rational points, which lift to generators in $H^d(X,I^d,\LC)$. For the injectivity, we consider the above ladder for $k=t=d$. By the five lemma, the desired monomorphism follows if we have an epimorphism for $H^{d-1}(X,\overline{I}^d)$ and monomorphisms for $H^d(X,I^{d+1})$ and $H^d(X,\overline{I}^d)$. The first monomorphsim follows from Jacobson stability, which extends to twists with line bundles $\LC$ by \cite[p. 299]{HWXZ}. Using the Milnor conjecture, both the epimorphism and the second monomorphism follow from \cite[Theorem 3.2 (d)]{CTS96}, which applies by our assumptions on $X$. 
\end{proof}

\section{Curves}

We now consider the case $d=1$. By Proposition \ref{isok=d}, the real cycle class map $c_{\R}:H^1(X,I^1)\to H^1(X(\R),\Z)$
is always an epimorphism, and often also a monomorphism. Hence let us look at $k=t=0$. At least with trivial twists, the map $W(X) \stackrel{\cong}{\to} H^0(X,W) \to H^0(X(\R),\Z)$ has already been studied by Knebusch \cite{Kn76}. (The identification of his map with $c_{\R}$ follows from the discussion of $\partial_{\mathfrak{p}}$ on page 197 of loc. cit..) Part (iv) of his Theorem 10.4 implies that the cokernel of $W(X) \to H^0(X(\R),\Z)$ is a sum of copies of $\Z/2$ with index set one less than $\pi_0(X(\R))$. 

\begin{example}\label{nonconnectedcurves}
Consider the projective elliptic curve $X$ associated to $y^2=x^3 -x$. We have $X(\R) \approx S^1 \sqcup S^1$, and hence the cokernel is isomorphic to $\Z/2$, represented by $(1,0)$ or $(0,1)$. Hence in cohomological degree zero, the real cycle class map is {\em not} always an epimorphism for smooth curves, in contrast to the classical complex cycle class map (when restricting the target to Hodge classes).
\end{example} 

\begin{remark}\label{realconditions} 
In fact, in loc. cit. Knebusch writes down a certain parity condition for elements being in the image of the Witt group $W(X)$. One might wonder if this result of Knebusch should be considered as a ''real Hodge condition''. However, we observe that it arises by Witt group considerations and purity. Hence it is not clear if similar conditions should be expected in large dimensions.  Another approach to possible algebraicity conditions in arbitrary dimension might come from stable equivariant real realization as studied e.g. in \cite{HO}. More concretely, this means to consider $X(\C)$ as a $G$-space, and to compute the image of the motivic spectrum representing $I$-cohomology under the stable real equivariant realization functor. See also \cite[Remark 2.4 (i)]{BW20} for a corresponding point of view for the other lift of the Borel-Haefliger cycle class map.
\end{remark}
   
\section{Surfaces}

We now consider $d=2$, and we recall that $c_{\R}$ is an isomorphism on $H^2(X,I^2,\LC)$ by Proposition \ref{isok=d} above if $X$ is affine or if $X$ has a rational point. We also recall that $W(X) \cong H^0(X,I^0)$ has been studied in many cases. For instance, \cite[Theorem 2.6, Theorem 3.3]{SvH00} provides a complete computation of $W(X)$ for a real (projective) Enriques surface in terms of the geometry of $X(\R)$, which is well understood (see below). Hence it remains to understand the precise behaviour of $c_{\R}$ on the generators given there.

We will now focus on $H^1(X,I^1)$. For this, we again wish to apply the 
ladder of long exact sequences and the inductive strategy outlined above, starting with $k=1$ and $t=1$. In fact, we have the following general result for arbitrary dimension $d$:

\begin{lemma}\label{keylemma}
Let $X$ be a smooth real variety of dimension $d$ and $\LC$ a line bundle on $X$, and assume that the morphism $c_{BH}^{d-1}: H^{d-1}(X,\overline{I}^{d-1}) \to H^{d-1}(X(\R),\Z/2)$ is surjective. 
Then the real cycle class map $c_{\R}^{d-1}:H^{d-1}(X,I^{d-1},\LC) \to H^{d-1}(X(\R),\Z(\LC))$ is surjective as well.
\end{lemma}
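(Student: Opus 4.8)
The plan is to run the B\"ar-sequence ladder at $k = t = d-1$ and chase the diagram, using the hypothesis that $c_{BH}^{d-1}$ is surjective together with the stabilization facts recalled earlier in the excerpt. Concretely, I would write down the commutative ladder of long exact sequences associated to $I^{d} \to I^{d-1} \to \overline{I}^{d-1} \cong K^M_{d-1}/2$, with $\LC$-twisted coefficients on the algebraic side and $\Z(\LC)$-twisted coefficients on the topological side:
\[
\xymatrix{
  H^{d-1}(X, I^{d},\LC) \ar[d] \ar[r] & H^{d-1}(X, I^{d-1},\LC) \ar[d]^{c_{\R}^{d-1}} \ar[r] & H^{d-1}(X, \overline{I}^{d-1}) \ar[d]^{c_{BH}^{d-1}} \ar[r]^-{\partial} & H^{d}(X, I^{d},\LC) \ar[d]^{c_{\R}^{d}}  \\
  H^{d-1}(X(\R),\Z(\LC)) \ar[r] & H^{d-1}(X(\R),\Z(\LC)) \ar[r] & H^{d-1}(X(\R),\Z/2) \ar[r]^-{\partial} & H^{d}(X(\R),\Z(\LC))
}
\]
Here I am using that $c_{\R}$ commutes with the boundary maps and with the other maps in the sequence, which extends to twisted coefficients, as recalled in the excerpt. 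The goal is to show the middle vertical map $c_{\R}^{d-1}$ is surjective.

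The key step is a short diagram chase. Take $\alpha \in H^{d-1}(X(\R),\Z(\LC))$ and let $\bar\alpha$ be its image in $H^{d-1}(X(\R),\Z/2)$. By hypothesis $c_{BH}^{d-1}$ is surjective, so there is $\beta \in H^{d-1}(X,\overline{I}^{d-1})$ with $c_{BH}^{d-1}(\beta) = \bar\alpha$. Now $\partial(\bar\alpha) = 0$ in the bottom row (since $\bar\alpha$ comes from $\alpha$), so $c_{\R}^{d}(\partial\beta) = \partial(c_{BH}^{d-1}(\beta)) = \partial(\bar\alpha) = 0$; since $c_{\R}^{d}$ is \emph{injective} by Proposition \ref{isok=d}$\dots$ wait — that injectivity needs $X$ affine or $X$ with a rational point, which we have not assumed. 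So instead I would only use that $c_{\R}^d$ is \emph{well-defined} and argue differently: since $\partial(\bar\alpha)=0$, exactness of the bottom row gives $\alpha' \in H^{d-1}(X(\R),\Z(\LC))$ mapping to $\bar\alpha$, but that is just $\alpha$ itself. The cleaner route: $\partial\beta \in H^d(X,I^d,\LC)$ maps under $c_{\R}^d$ to $\partial\bar\alpha = 0$; we cannot kill $\partial\beta$ without injectivity, so we must instead correct $\beta$. Lift $\bar\alpha$ through the bottom row to some $\tilde\alpha \in H^{d-1}(X(\R),\Z(\LC))$ — take $\tilde\alpha = \alpha$. Then $\beta$ and the chosen preimage of $\bar\alpha$ need not match after applying $c_{\R}$; the difference $\alpha - (\text{image of } c_{\R}^{d-1}(\text{any lift of }\beta))$ maps to $0$ in $H^{d-1}(X(\R),\Z/2)$, hence comes from $H^{d-1}(X(\R),\Z(\LC))$ via multiplication by $2$ along the bottom row, i.e.\ equals $2\gamma$ for some $\gamma$.

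So the main obstacle, and the place where the remaining input must come from, is showing that this residual class $2\gamma \in H^{d-1}(X(\R),\Z(\LC))$ is in the image of $c_{\R}^{d-1}$. Here I would invoke the general cokernel bound recalled in the excerpt: the cokernel of $c_{\R}^{d-1}$ is $2$-primary torsion annihilated by $2^{d-(d-1)+1} = 2^2 = 4$, so every class divisible by $4$ is already algebraic — but that gives $4\gamma$, not $2\gamma$. The sharper statement I actually want is Corollary \ref{betterbound} (the improved bound for $k = d-1$), which the excerpt announces: for $k=d-1$ the cokernel is killed by $2$, so every element divisible by $2$ lifts. Thus $2\gamma$ lifts to some $\delta \in H^{d-1}(X,I^{d-1},\LC)$, and then a lift of $\beta$ plus $\delta$ maps under $c_{\R}^{d-1}$ to $\alpha$, proving surjectivity. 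The subtlety to get right in the writeup is the bookkeeping of the twist $\Z(\LC)$ throughout (all four maps in the ladder are compatible with it by \cite[section 2.E.1]{HWXZ}) and making sure the "divisible by $2$" reduction is applied to the correct group; I expect no genuinely hard analytic point, just a careful diagram chase feeding into the already-established cokernel bound for degree $d-1$.
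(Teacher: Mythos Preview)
Your proposal has two genuine gaps that the paper's argument avoids.

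First, you correctly notice that injectivity of $c_{\R}^d$ requires $X$ to have a rational point, but then you abandon this rather than resolving it. The paper's fix is immediate: if $X(\R)=\emptyset$ the target is zero and there is nothing to prove, so one may assume a rational point exists and hence Proposition~\ref{isok=d} gives injectivity of $c_{\R}^d$.

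Second, and more seriously, your diagram chase silently assumes that $\beta \in H^{d-1}(X,\overline{I}^{d-1})$ lifts to $H^{d-1}(X,I^{d-1},\LC)$, when you write ``$\alpha - (\text{image of } c_{\R}^{d-1}(\text{any lift of }\beta))$''. Such a lift exists only if $\partial\beta=0$, which you have not shown. With $c_{\R}^d$ injective (from the first fix) you would indeed get $\partial\beta=0$, but you explicitly set that route aside. Your attempt to circumvent this by appealing to Corollary~\ref{betterbound} is circular: that corollary is deduced \emph{from} the proof of Lemma~\ref{keylemmarefined}, which in turn cites ``the previous proof'', i.e.\ the very lemma you are trying to establish.

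The paper's proof is a clean five-lemma argument, and the piece you are missing entirely is the surjectivity of the \emph{leftmost} vertical map in your ladder, $H^{d-1}(X,I^d,\LC) \to H^{d-1}(X(\R),\Z(\LC))$. The paper obtains this by running the B\"ar ladder once more at $k=d-1$, $t=d$: Jacobson stability identifies $H^{d-1}(X,I^{d+1},\LC)\cong H^{d-1}(X(\R),\Z(\LC))$, so surjectivity on $H^{d-1}(X,I^d,\LC)$ reduces to surjectivity on $H^{d-1}(X,K^M_d/2)$, and the latter is supplied by \cite[Theorem 3.2(d)]{CTS96}. With that surjectivity, the hypothesis on $c_{BH}^{d-1}$, and the injectivity of $c_{\R}^d$, the five lemma finishes the proof with no further chase needed.
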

\begin{proof} If $X(\R)=\emptyset$ there is nothing to prove, so we may assume $X$ has a rational point.
By Proposition \ref{isok=d} we then have injectivity of $c^d_{\R}$. Hence by the five lemma, the surjectivity of $c_{\R}^{d-1}:H^{d-1}(X,I^{d-1}) \to H^{d-1}(X(\R),\Z)$ will follow from the surjectivity on $H^{d-1}(X,\overline{I}^{d-1})$ (the classical Borel-Haelfliger map $c_{BH}^{d-1}$) and on $H^{d-1}(X,I^d)$. To investigate the latter, we consider the diagram for $k=d-1$ and $t=d$, which by Jacobson's stability results above yields the surjectivity on $H^{d-1}(X,I^d)$ if we have surjectivity on $H^{d-1}(X,K^M_d/2)$. However, the latter is always surjective by \cite[Theorem 3.2 (d)]{CTS96}, observing once more that $\mathcal{H}^d$ in loc. cit. is the sheaf associated to $K^M_d/2\cong \overline{I}^d$ by the Milnor conjecture. 
\end{proof}

So for surfaces, integral surjectivity follows from surjectivity of $c_{BH}$ on $Pic(X)/2$.  Of course, in general $Pic(X)/2 \to H^1(X(\R),\Z/2)$ is not surjective, as the example $X=\A^2-\{0\}$ shows (see e.g. \cite[Remark 2.60]{HWXZ}). In this case, the following refinement of Lemma \ref{keylemma} is sometimes useful.

\begin{lemma}\label{keylemmarefined}
Let $X$ be a smooth real variety of dimension $d$ and $\LC$ a line bundle on $X$, Then the real cycle class map $c_{\R}^1:H^{d-1}(X,I^{d-1},\LC) \to H^{d-1}(X(\R),\Z(\LC))$ is surjective if and only if the morphism $c_{BH}^{d-1}: H^{d-1}(X,\overline{I}^{d-1}) \to H^{d-1}(X(\R),\Z/2)$ is surjective on all elements lifting to $H^{d-1}(X(\R),\Z(\LC))$.
\end{lemma}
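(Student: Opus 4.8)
The plan is to chase the commutative ladder of B\"ar long exact sequences from the proof of Lemma~\ref{keylemma}, in bidegree $k=t=d-1$, but now keeping explicit track of the mod~$2$ discrepancy rather than invoking the five lemma in one go. Write $\rho\colon H^{d-1}(X(\R),\Z(\LC))\to H^{d-1}(X(\R),\Z/2)$ for reduction of coefficients, $\pi\colon H^{d-1}(X,I^{d-1},\LC)\to H^{d-1}(X,\overline{I}^{d-1})$ for the map induced by $I^{d-1}\to\overline{I}^{d-1}$, and $b\colon H^{d-1}(X,I^d,\LC)\to H^{d-1}(X(\R),\Z(\LC))$ for the real cycle class map on $H^{d-1}(X,I^d,\LC)$. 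Since the mod~$2$ sheaf $\overline{I}^{d-1}$ does not see line-bundle twists and $\Z(\LC)/2\cong\Z/2$, the Borel--Haefliger map $c_{BH}^{d-1}\colon H^{d-1}(X,\overline{I}^{d-1})\to H^{d-1}(X(\R),\Z/2)$ genuinely controls the $\LC$-twisted situation, and I read the hypothesis as the inclusion $\operatorname{im}(\rho)\subseteq\operatorname{im}(c_{BH}^{d-1})$ of subgroups of $H^{d-1}(X(\R),\Z/2)$.

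The ``only if'' direction is immediate. If $c_{\R}^{d-1}$ is surjective and $\beta=\rho(\gamma)$ for some $\gamma$, choose $x$ with $c_{\R}^{d-1}(x)=\gamma$; commutativity of the square relating $c_{\R}^{d-1},\pi,\rho,c_{BH}^{d-1}$ gives $c_{BH}^{d-1}(\pi(x))=\rho(c_{\R}^{d-1}(x))=\beta$, so $\beta\in\operatorname{im}(c_{BH}^{d-1})$.

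For the ``if'' direction I would first dispose of the case $X(\R)=\emptyset$, and otherwise fix a rational point of $X$, so that $c_{\R}^d$ is an isomorphism by Proposition~\ref{isok=d}; I also record, from the proof of Lemma~\ref{keylemma}, that $b$ is surjective (this rests only on Jacobson stability and \cite[Theorem~3.2(d)]{CTS96} and needs no further hypothesis on $X$). Now fix $\gamma\in H^{d-1}(X(\R),\Z(\LC))$. By hypothesis $\rho(\gamma)=c_{BH}^{d-1}(\overline{x})$ for some $\overline{x}\in H^{d-1}(X,\overline{I}^{d-1})$. Since $c_{\R}$ commutes with the B\"ar boundary, $c_{\R}^d(\partial\overline{x})$ equals the image of $\rho(\gamma)$ under the connecting map $H^{d-1}(X(\R),\Z/2)\to H^d(X(\R),\Z(\LC))$, which vanishes because that map is consecutive to $\rho$ in the bottom long exact sequence; as $c_{\R}^d$ is injective, $\partial\overline{x}=0$, and hence $\overline{x}=\pi(x)$ for some $x\in H^{d-1}(X,I^{d-1},\LC)$. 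Then $\rho\bigl(\gamma-c_{\R}^{d-1}(x)\bigr)=\rho(\gamma)-c_{BH}^{d-1}(\overline{x})=0$, so by exactness of the bottom row $\gamma-c_{\R}^{d-1}(x)=2\delta$ for some $\delta\in H^{d-1}(X(\R),\Z(\LC))$. Write $\delta=b(z)$ using surjectivity of $b$; since the inclusion $I^d\hookrightarrow I^{d-1}$ corresponds to multiplication by $2$ in classical cohomology, $c_{\R}^{d-1}$ applied to the image of $z$ in $H^{d-1}(X,I^{d-1},\LC)$ equals $2b(z)=2\delta$. Therefore $\gamma=c_{\R}^{d-1}\bigl(x+(\text{image of }z)\bigr)$, which proves surjectivity of $c_{\R}^{d-1}$.

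Everything beyond the diagram chase is imported from earlier in the paper: the surjectivity of $b$ and the bijectivity of $c_{\R}^d$ come from (the proof of) Lemma~\ref{keylemma} and Proposition~\ref{isok=d}, and the identification of $I^d\hookrightarrow I^{d-1}$ with multiplication by $2$ is among the compatibilities of $c_{\R}$ recalled in Section~\ref{recollections}. There is thus no serious obstacle; the step that deserves the most care — and the only place where twists really enter — is checking that the $\LC$-twisting is invisible on the mod~$2$ sheaf $\overline{I}^{d-1}$ and on $\Z(\LC)/2$, so that the hypothesis on the untwisted $c_{BH}^{d-1}$ is exactly what is needed. This is part of the extension of the B\"ar ladder to twisted coefficients from \cite{HWXZ}, but it should be spelled out.
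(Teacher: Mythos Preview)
Your proof is correct and follows essentially the same approach as the paper's: both dispose of $X(\R)=\emptyset$, import the surjectivity of $b$ and the injectivity of $c_{\R}^d$ from the proof of Lemma~\ref{keylemma} and Proposition~\ref{isok=d}, treat the ``only if'' direction trivially via the commutative square, and obtain the ``if'' direction by the diagram chase underlying the surjectivity part of the five lemma. The only difference is that the paper merely points to ``the diagram chase one does for establishing surjectivity in the proof of the five lemma'', whereas you write it out explicitly --- which is arguably an improvement, and your remark about the twist being invisible on $\overline{I}^{d-1}$ and $\Z(\LC)/2$ is a point worth making.
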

\begin{proof} As in the previous proof, after discarding the case $X(\R)=\emptyset$, we have surjectivity on $H^{d-1}(X,I^d)$ and injectivity for $c^d_{\R}$. If an element $a \in H^{d-1}(X(\R),\Z(\LC))$ is in the image of $c^{d-1}_{\R}$, then obviously its mod $2$ reduction $\overline{a}$ is in the image of $c^{d-1}_{BH}$. Conversely, if $\overline{a}$ is algebraic, then the diagram chase one does for establishing surjectivity in the proof of the five lemma shows in fact that $a$ is of algebraic orign as well.
\end{proof}

In fact, the above proof yields a precise condition for elements in $H^{d-1}(X(\R),\Z(\LC))$ to be algebraic, i.e. in the image of $c^{d-1}_{\R}$. This also leads to a better bound of the 2-primary torsion of the cokernel of the real cycle class map in codimension one.

\begin{corollary}\label{betterbound} 
For $X$, $d$ and $\LC$ as above, an element $a \in H^{d-1}(X(\R),\Z(\LC))$ is algebraic if and only if its reduction $\overline{a} \in H^{d-1}(X(\R),\Z/2)$ is algebraic. In particular, all elements in $H^{d-1}(X(\R),\Z(\LC))$ divisible by $2$ are algebraic, and the cokernel of $c^{d-1}_{\R}$ is 2-torsion.
\end{corollary}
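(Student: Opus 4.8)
The plan is to obtain the ``if and only if'' essentially for free from the diagram chase already run in the proof of Lemma~\ref{keylemmarefined}, and then to deduce the two remaining assertions by completely elementary manipulations. First I would discard the case $X(\R)=\emptyset$, and otherwise assume $X$ has a rational point. Then Proposition~\ref{isok=d} gives injectivity of $c_{\R}^d$, and the B\"ar ladder for $k=d-1$, $t=d$ together with the surjectivity of $H^{d-1}(X,K^M_d/2)\to H^{d-1}(X(\R),\Z/2)$ from \cite[Theorem 3.2 (d)]{CTS96} and Jacobson stability gives surjectivity of the natural map $H^{d-1}(X,I^d,\LC)\to H^{d-1}(X(\R),\Z(\LC))$ --- these are exactly the two inputs already used in Lemmas~\ref{keylemma} and \ref{keylemmarefined}. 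Everything here extends to twists, using that the $\Z/2$-local system attached to any line bundle on $X$ is canonically trivial, so that modulo~$2$ one is back in the untwisted ladder.

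For the equivalence, the direction ``$a$ in the image of $c_{\R}^{d-1}$ $\Rightarrow$ $\overline a$ in the image of $c_{BH}^{d-1}$'' is immediate since the vertical maps of the ladder commute with mod~$2$ reduction. For the converse I would repeat the chase from the proof of Lemma~\ref{keylemmarefined}: choose $\tilde\alpha\in H^{d-1}(X,\overline{I}^{d-1})$ with $c_{BH}^{d-1}(\tilde\alpha)=\overline a$; since $c_{\R}$ commutes with boundary maps, $c_{\R}^d(\partial\tilde\alpha)$ is the image of $\overline a$ under the topological connecting map $H^{d-1}(X(\R),\Z/2)\to H^d(X(\R),\Z(\LC))$, which is the integral Bockstein and hence annihilates the reduction $\overline a$ of the integral class $a$; injectivity of $c_{\R}^d$ then forces $\partial\tilde\alpha=0$, so $\tilde\alpha$ lifts to some $\alpha'\in H^{d-1}(X,I^{d-1},\LC)$. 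Since $\alpha'$ lifts $\tilde\alpha$, the class $a-c_{\R}^{d-1}(\alpha')$ reduces to $0$ mod~$2$ and hence equals $2b$ for some $b\in H^{d-1}(X(\R),\Z(\LC))$; as $H^{d-1}(X,I^d,\LC)$ surjects onto $H^{d-1}(X(\R),\Z(\LC))$ and its inclusion into $H^{d-1}(X,I^{d-1},\LC)$ induces multiplication by $2$ on the target, $2b$ is algebraic, and therefore so is $a$.

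The two ``in particular'' statements are then formal. If $a\in H^{d-1}(X(\R),\Z(\LC))$ is divisible by $2$, its mod~$2$ reduction $\overline a$ equals $0$, which lies in the image of the homomorphism $c_{BH}^{d-1}$, so $a$ is algebraic by the equivalence just established. For an arbitrary class $a$, the element $2a$ is divisible by $2$ and hence algebraic, so every element in the cokernel of $c_{\R}^{d-1}$ is annihilated by $2$; that is, the cokernel is $2$-torsion. I do not expect a genuine obstacle here --- the statement is a repackaging of Lemma~\ref{keylemmarefined} and Proposition~\ref{isok=d} --- and the only points that need a little care are the bookkeeping with the twisted coefficients and the identification of the topological connecting map in the B\"ar ladder with the Bockstein, since it is this last fact that makes ``$\overline a$ is a reduction'' force $\partial\tilde\alpha = 0$.
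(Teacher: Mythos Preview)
Your proposal is correct and follows essentially the same approach as the paper. The paper does not give a separate proof of this corollary; it simply states (in the sentence preceding the statement) that the proof of Lemma~\ref{keylemmarefined} already yields the precise elementwise criterion, and you have accurately unpacked that diagram chase, including the point that the connecting map on the bottom row is the integral Bockstein and hence kills reductions of integral classes.
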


In the following subsections, we will apply these lemmas to several classes of smooth real surfaces. This will lead us to some examples where $c_{\R}^1:H^1(X,I^1) \to H^1(X(\R),\Z)$ is surjective, and to some others where it is not surjective. We recall that the torsion of the cokernel of $c_{\R}^1$ for smooth real surfaces is always bounded by $2^{2-1+1}=4$ thanks to the work of Jacobson.  

\subsection{Geometrically rational surfaces}

For generalities about geometrically rational surfaces, see e.g. \cite[chapters 4.2, 4.4]{Ma20}. For any geometrically rational (projective) surface $X$, we have that $X(\R)$ is empty, a torus or a disjoint union of spheres and connected sums of $\P^2$. This can be deduced from the more precise statements \cite[Proposition 4.4.10, Theorem 4.4.11]{Ma20}, using that by Corollaire F.3.2 in loc. cit. blow ups of points correspond to connected sums with $\R\P^2$.  

The assumption of Lemma \ref{keylemma} for geometrically birational surfaces is satisfied, as the Borel-Haefliger map starting in $Pic(X)/2$ is always an epimorphism for those by \cite[Corollary 3.17, Remark 3.18 (i)]{BW20} and in fact by the more classical references given there.
\begin{proposition}
Let $X$ be a smooth projective geometrically rational surface, and $\LC$ a line bundle on $X$. Then the real cycle class map  $c_{\R}^1:H^1(X,I^1,\LC) \to H^1(X(\R),\Z(\LC))$ is an epimorphism. 
\end{proposition}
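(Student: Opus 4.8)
The plan is to deduce this proposition directly from Lemma \ref{keylemma}, which reduces the surjectivity of $c_{\R}^1$ in codimension one to the surjectivity of the mod-$2$ Borel--Haefliger map $c_{BH}^{1}\colon H^1(X,\overline{I}^1)\to H^1(X(\R),\Z/2)$; note that for $d=2$ we have $\overline{I}^1\cong K^M_1/2$ and $H^1(X,K^M_1/2)\cong \mathrm{Pic}(X)/2$, so the hypothesis of Lemma \ref{keylemma} is exactly surjectivity of the classical cycle class map from $\mathrm{Pic}(X)/2$ to $H^1(X(\R),\Z/2)$. So the entire content is to verify this classical surjectivity for smooth projective geometrically rational real surfaces.

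First I would record that $X$ may be assumed to have real points (otherwise there is nothing to prove, as noted in the proof of Lemma \ref{keylemma}), and recall the classification of $X(\R)$ for such surfaces from \cite[Proposition 4.4.10, Theorem 4.4.11]{Ma20}: it is either a torus, or a disjoint union of spheres and connected sums of copies of $\RP^2$, obtained from minimal models by successive blow-ups of points, which on real points are connected sums with $\RP^2$ by \cite[Corollaire F.3.2]{Ma20}. Then I would invoke \cite[Corollary 3.17, Remark 3.18 (i)]{BW20}, together with the more classical references cited there, which give precisely that $c_{BH}^1\colon \mathrm{Pic}(X)/2\to H^1(X(\R),\Z/2)$ is an epimorphism for geometrically rational surfaces. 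With this in hand, Lemma \ref{keylemma} applies verbatim and yields the surjectivity of $c_{\R}^1\colon H^1(X,I^1,\LC)\to H^1(X(\R),\Z(\LC))$ for every line bundle $\LC$, since Lemma \ref{keylemma} is stated with arbitrary twists.

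The one point deserving care is that Lemma \ref{keylemma} as stated twists the source by $\LC$ but asks only for surjectivity of the \emph{untwisted} $c_{BH}^{d-1}$ on $H^{d-1}(X,\overline{I}^{d-1})$; this is consistent because the relevant Bär ladder and Jacobson stability argument in the proof of Lemma \ref{keylemma} are insensitive to the twist once one knows that $c_{\R}^d$ is injective (Proposition \ref{isok=d}, which holds with twists) and that $H^{d-1}(X,K^M_d/2)\to H^{d-1}(X(\R),\Z/2)$ is surjective (by \cite[Theorem 3.2 (d)]{CTS96}). So no additional twisted input about the Picard group is needed beyond what Lemma \ref{keylemma} already packages.

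I do not expect a genuine obstacle here: the proposition is essentially a translation of Lemma \ref{keylemma} through the known classical surjectivity of the Borel--Haefliger map on $\mathrm{Pic}(X)/2$ for geometrically rational surfaces. The only thing one must be slightly attentive to is citing the right form of that classical fact --- \cite[Corollary 3.17, Remark 3.18 (i)]{BW20} --- and observing that it covers all the geometric types of $X(\R)$ arising in the classification, including the non-orientable connected sums of $\RP^2$'s, which is exactly the case where the integral statement is most interesting. Thus the proof is short: reduce to the classical surjectivity via Lemma \ref{keylemma}, then quote it.

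\begin{proof}
If $X(\R)=\emptyset$ there is nothing to prove, so we may assume $X$ has a rational point. For $d=2$, the sheaf $\overline{I}^1$ is identified with $K^M_1/2$, so $H^1(X,\overline{I}^1)\cong \mathrm{Pic}(X)/2$ and the map $c_{BH}^1\colon H^1(X,\overline{I}^1)\to H^1(X(\R),\Z/2)$ is the classical Borel--Haefliger cycle class map from $\mathrm{Pic}(X)/2$. By \cite[Corollary 3.17, Remark 3.18 (i)]{BW20} (and the more classical references given there), this map is an epimorphism for every smooth projective geometrically rational real surface. The hypothesis of Lemma \ref{keylemma} is therefore satisfied, and applying that lemma with the line bundle $\LC$ gives that $c_{\R}^1\colon H^1(X,I^1,\LC)\to H^1(X(\R),\Z(\LC))$ is surjective.
\end{proof}
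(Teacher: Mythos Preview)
Your proposal is correct and matches the paper's own proof essentially line for line: the paper's argument is simply ``This follows from Lemma~\ref{keylemma} and the references above,'' where the references above are precisely \cite[Corollary 3.17, Remark 3.18 (i)]{BW20} for the surjectivity of $c_{BH}^1$ on $\mathrm{Pic}(X)/2$. Your additional remarks on the twist and on the identification $H^1(X,\overline{I}^1)\cong \mathrm{Pic}(X)/2$ are accurate elaborations of points the paper leaves implicit.
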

\begin{proof}
This follows from Lemma \ref{keylemma} and the references above.
\end{proof}  

\subsection{Enriques surfaces}

By definition (see e.g. \cite{MvH98}), a real Enriques resp. K3 surface is a smooth projective real surface $X$ such that $X_{\C}$ is Enriques resp. K3. Still by definition, K3 surfaces satisfy $\pi_1^{et}(X_{\C})=1$ -- they are 2-fold coverings of Enriques surfaces -- and Enriques surfaces satisfy $H^2(X_{\C},\Oo)=0$, whereas for geometrically rationally surfaces both these groups vanish.  We refer to \cite{DK96} for a complete classification of $X(\R)$ for real Enriques surfaces $X$, with gives rise to 87 different topological types, and which is reproduced in \cite[Theorem 4.5.16]{Ma20}.

A theorem of Mangolte and van Hamel \cite[Theorem 1.1, Theorem 4.4]{MvH98} says that for $X$ Enriques, the map $Pic(X)/2 \to H^1(X(\R),\Z/2)$ is an epimorphism if and only if $X(\R)$ is orientable, which by the above classification is satisfied in some but not all examples.

\begin{proposition}\label{enriques}
Let $X$ be a real Enriques surface. Then the real cycle class map  $c_{\R}^1:H^1(X,I^1) \to H^1(X(\R),\Z)$ is an epimorphism. 
\end{proposition}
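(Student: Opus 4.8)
The plan is to reduce the statement to Lemma~\ref{keylemmarefined}, thereby avoiding the obstruction caused by non-orientability of $X(\R)$. By Corollary~\ref{betterbound}, every element of $H^1(X(\R),\Z)$ divisible by $2$ is already algebraic, so the only classes we need to worry about are those whose mod~$2$ reduction is nonzero but not in the image of $c_{BH}^1$. Concretely, by Lemma~\ref{keylemmarefined} (applied with $d=2$ and $\LC=\Oo_X$), it suffices to show: every class $\overline{a} \in H^1(X(\R),\Z/2)$ that lifts to an integral class in $H^1(X(\R),\Z)$ lies in the image of $c_{BH}^1 : \Pic(X)/2 \to H^1(X(\R),\Z/2)$. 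A class in $H^1(X(\R),\Z/2)$ lifts to $H^1(X(\R),\Z)$ precisely when its Bockstein $\beta(\overline{a}) \in H^2(X(\R),\Z)$ vanishes; since $H^1(X(\R),\Z)$ is free, this is the same as saying $\overline{a}$ is in the image of the mod~$2$ reduction $H^1(X(\R),\Z) \to H^1(X(\R),\Z/2)$.

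First I would pin down the structure of $H^1(X(\R),\Z/2)$ using the classification of real Enriques surfaces \cite{DK96} (reproduced as \cite[Theorem 4.5.16]{Ma20}). The point is that for each connected component $Y$ of $X(\R)$, the reduction map $H^1(Y,\Z) \to H^1(Y,\Z/2)$ has image equal to the subgroup of classes with trivial Bockstein, and its cokernel is detected by the first Stiefel--Whitney class / the non-orientable part of $H^1(Y,\Z/2)$. So the subgroup of liftable classes in $H^1(X(\R),\Z/2)$ is exactly the ``orientation-respecting'' part. Then I would invoke the Mangolte--van Hamel description: their proof of \cite[Theorem 1.1, Theorem 4.4]{MvH98} shows that the image of $\Pic(X)/2 \to H^1(X(\R),\Z/2)$ always contains this orientation-respecting subgroup --- the only classes that can fail to be hit are precisely those detected by non-orientability, i.e.\ those that do not lift integrally. (Indeed, that is the mechanism behind their ``epimorphism iff orientable'' statement: surjectivity fails exactly on the non-liftable classes.)

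Combining these two inputs: the liftable classes in $H^1(X(\R),\Z/2)$ coincide with the image of $c_{BH}^1$, which is exactly the hypothesis needed to apply Lemma~\ref{keylemmarefined}. Hence $c_{\R}^1 : H^1(X,I^1) \to H^1(X(\R),\Z)$ is surjective. I expect the main obstacle to be the second step: extracting from \cite{MvH98} the precise statement that the image of $c_{BH}^1$ equals the orientation-respecting subgroup, rather than merely the dichotomy ``surjective iff orientable''. This may require re-reading their argument (which goes through the comparison with equivariant cohomology and the Kalinin/van Hamel spectral sequence) to confirm that the obstruction they identify is exactly the Bockstein obstruction and not something finer; if their paper only gives the dichotomy, one would instead argue component-by-component, handling the non-orientable components via the fact that $X(\R)$ for an Enriques surface has a very restricted list of possible components and checking each against the Picard group of real divisors. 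Once that is in hand, the rest is a direct citation of Lemma~\ref{keylemmarefined} and Corollary~\ref{betterbound}.
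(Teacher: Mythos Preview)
Your approach is exactly the paper's: reduce via Lemma~\ref{keylemmarefined} to showing that every class in $H^1(X(\R),\Z/2)$ which lifts to $H^1(X(\R),\Z)$ lies in the image of $c^1_{BH}$, and extract this from Mangolte--van~Hamel. The uncertainty you flag in your second step is resolved by the precise (cohomological, via Poincar\'e duality) form of \cite[Theorem~4.4]{MvH98}: the image of $c^1_{BH}$ consists exactly of those $\overline{a}\in H^1(X(\R),\Z/2)$ with $\overline{a}\cup w_1(X(\R))=0$. The paper then checks by hand that every integrally liftable $\overline{a}$ satisfies this, working component by component: on orientable components $w_1=0$; on a non-orientable component $V_k$ (the connected sum of $k$ copies of $\R\P^2$) one has $H^*(V_k,\Z/2)=\Z/2[a_1,\dots,a_k]/(a_ia_j,\ i\neq j;\ a_i^2=a_j^2)$, $w_1(V_k)=a_1+\cdots+a_k$, and the image of $H^1(V_k,\Z)\cong\Z^{k-1}$ under reduction is generated by the classes $a_l+a_{l+1}$, each of which visibly cups to zero with $w_1(V_k)$. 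So no case analysis over the $87$ topological types is needed, and your fallback plan is unnecessary.
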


Hence we have discovered an important class of real algebraic surfaces for which $c^1_{\R}$ is always surjective, although $c^1_{BH}$ sometimes isn't.

\begin{proof}
If $X(\R)$ is oriented, then this follows from Lemma \ref{keylemma} and the above result of Mangolte and van Hamel.
Hence let us assume that $X(\R)$ is not orientable, so that the Borel-Haefliger cycle class map $c_{BH}^1$ for $CH^1(X)/2$ is not an epimorphism. Of course, it could still be that $c_{\R}^1$ is an epimorphism, because the non-algebraic elements in $H^1(X(\R),\Z/2)$ do not lift to $H^1(X(\R),\Z)$. We now look at this more closely, studying cohomology of non-oriented surfaces and using the finer Lemma \ref{keylemmarefined}. 
Following \cite{DK96}, we write $V_k$ for the non-oriented surface of genus $k$, which may be constructed as the connected sum of $k$ copies of $\R\P^2$. The cohomology ring $H^*(V_k,\Z/2)$ is given by $\Z/2[a_1,...,a_k]/\sim$ with all generators in degree one and relations $a_i a_j=0$ for all $i \neq j$ and $(a_i)^2=1$ for all $i=1,...,k$, see e.g. \cite[Examle 3.8]{hatcher}. We also have $w_1(V_k)=a_1 + ... + a_k$, using that $H^1(\R\P^2,\Z) \stackrel{\cong}{\to} H^1(\R\P^2 -\{pt\},\Z)$ and Mayer-Vietoris. Another standard inductive computation shows that the first integral cohomology group $H^1(V_k,\Z)$ is isomorphic to $\Z^{k-1}$, and we may think of the generators as the circles along which we glue when forming the connected sum. Comparing Mayer-Vietoris sequences, we see that the reduction map $H^1(V_k,\Z) \to H^1(V_k,\Z/2)$ maps the $l$th generator to $a_l + a_{l+1}$. As $(a_l + a_{l+1}) \cdot w_1(V_k)=(a_l)^2 + (a_{l+1})^2 =0$, it follows that all elements in the image of the reduction map are orthogonal (with respect to the cup product) to the first Stiefel-Whitney class. Now consider the decomposition $H^1(X(\R),\Z) \cong H^1(Y,\Z) \oplus \bigoplus_{i \in I} H^1(V_{g_i},\Z)$ with $Y$ oriented (possibly empty) and $I$ finite (possibly empty), which also exists with $\Z/2$-coefficients, and note that the multiplicative structure in cohomology respects this decomposition. We now wish to apply Lemma \ref{keylemmarefined} and the cohomological variant (using Poincar\'e duality) of \cite[Theorem 4.4]{MvH98}. Let $c+b_1 + ... + b_s \in H^1(X(\R),\Z)$ be an element with respect to the above decomposition, and with reduction $\overline{c}+\overline{b_1} + ... + \overline{b_s}$. Now we multiply the latter with $w_1(X(\R))$, which yields $w_1(V_{g_i}) \cdot \overline{b_1} + ... + w_1(V_{g_s}) \cdot \overline{b_s}$. As by assumption all $\overline{b_i}$ come from integral cohomology classes, the above computations for $V_g$ show that all summands $w_1(V_{g_i}) \cdot \overline{b_i}$ are zero, hence so is the sum. Now the above result of Mangolte and van Hamel says that this sum is zero if and only if $\overline{c}+\overline{b_1} + ... + \overline{b_s}$ is in the image of $c^1_{BH}$. Thus we may apply Lemma \ref{keylemmarefined} to conclude.
\end{proof}

For more detailed computations on real Enriques surfaces, see e.g. \cite{MvH98}, \cite{SvH00}, \cite[Example 11.3.17]{BCR98}, \cite[Examples 4.3.1]{CTS96}, \cite{DIK00}, \cite[Examples 4.3]{BW20}, \cite[section 4.5.4]{Ma20}.

\subsection{K3 surfaces}

For $X$ a real smooth projective K3 surface, the possible topological types are again completely classified by Kharlamov \cite{kharlamov76}, yielding a list of 66 topological types. The result is nicely reproduced in \cite[Theorem 4.5.5]{Ma20}. Unlike in the Enriques case, $X(\R)$ is always orientable.

\begin{proposition}\label{K3}
Let $X$ be a real K3 surface. Then the real cycle class map  $c_{\R}^1:H^1(X,I^1) \to H^1(X(\R),\Z)$ is an epimorphism if and only if the Borel-Haefliger cycle class map $c^1_{BH}$ is an isomorphism. 
\end{proposition}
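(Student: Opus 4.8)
The plan is to apply Lemma~\ref{keylemma} for the ``if'' direction and Lemma~\ref{keylemmarefined} for the ``only if'' direction, so that everything reduces to understanding the Borel--Haefliger map $c^1_{BH}\colon \mathrm{Pic}(X)/2 \to H^1(X(\R),\Z/2)$ and the reduction map $H^1(X(\R),\Z) \to H^1(X(\R),\Z/2)$ on real K3 surfaces. First I would record the topological input: by Kharlamov's classification \cite{kharlamov76} (see \cite[Theorem 4.5.5]{Ma20}), for a real K3 surface $X(\R)$ is always orientable, so each connected component is a closed orientable surface and the reduction map $H^1(X(\R),\Z) \to H^1(X(\R),\Z/2)$ is surjective. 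This surjectivity of reduction is the crucial point that makes the two lemmas collapse into a clean equivalence in this case.

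The ``if'' direction is then immediate: if $c^1_{BH}$ is an epimorphism, Lemma~\ref{keylemma} gives surjectivity of $c^1_{\R}$ directly (one does not even need the isomorphism hypothesis here, only surjectivity). For the ``only if'' direction, suppose $c^1_{\R}\colon H^1(X,I^1) \to H^1(X(\R),\Z)$ is surjective. Given any $\overline{a} \in H^1(X(\R),\Z/2)$, orientability of $X(\R)$ lets me lift $\overline{a}$ to some $a \in H^1(X(\R),\Z)$; by assumption $a$ is algebraic, hence by Corollary~\ref{betterbound} (or directly by Lemma~\ref{keylemmarefined}) its reduction $\overline{a}$ is in the image of $c^1_{BH}$. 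Since $\overline{a}$ was arbitrary, $c^1_{BH}$ is surjective onto $H^1(X(\R),\Z/2)$.

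What remains is to upgrade ``$c^1_{BH}$ surjective'' to ``$c^1_{BH}$ an isomorphism'', i.e. to argue injectivity of $\mathrm{Pic}(X)/2 \to H^1(X(\R),\Z/2)$ comes for free here, or else to note that the two coincide. I would invoke a dimension/rank count: for a real K3 surface the relevant comparison between the rank of $\mathrm{Pic}(X)/2$ (equivalently the $2$-torsion-free part of the Néron--Severi-type data, via the standard description of $\mathrm{Pic}$ of a real K3 and its behaviour under $-\otimes\Z/2$) and $\dim_{\Z/2} H^1(X(\R),\Z/2)$ forces a surjection between them to be bijective; this is exactly the kind of statement that appears in the literature on real K3 surfaces (compare the Enriques analogue \cite{MvH98, SvH00} and the discussion in \cite[section 4.5]{Ma20}). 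I expect this last step --- pinning down that surjectivity of $c^1_{BH}$ already implies injectivity, so that ``epimorphism'' and ``isomorphism'' are interchangeable in the statement --- to be the main obstacle, since it requires a genuine input about the structure of $\mathrm{Pic}(X)$ for real K3s rather than the formal homological manipulations that handle the rest; the fallback, if a clean rank argument is not available, is simply to cite the relevant computation of $c^1_{BH}$ for K3 surfaces from the sources above and observe that in every topological type on Kharlamov's list the map is injective whenever it is surjective.
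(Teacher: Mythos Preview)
Your argument for the equivalence with surjectivity of $c^1_{BH}$ is exactly the paper's: orientability of $X(\R)$ forces $H^2(X(\R),\Z)$ to be torsion-free, hence the reduction $H^1(X(\R),\Z)\to H^1(X(\R),\Z/2)$ is onto, and Lemma~\ref{keylemmarefined} then gives the ``if and only if'' in one stroke. (The paper does not split into Lemma~\ref{keylemma} for one direction and Lemma~\ref{keylemmarefined} for the other; it simply applies the latter, but this is cosmetic.)

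Your final paragraph, however, is chasing a phantom. The word ``isomorphism'' in the proposition is a slip for ``epimorphism'': compare the version of the same statement in the Introduction, which reads ``\dots if and only if the Borel--Haefliger cycle class map $c^1_{BH}$ is an epimorphism''. The paper's own proof makes no attempt to upgrade surjectivity of $c^1_{BH}$ to bijectivity, and indeed $c^1_{BH}\colon \mathrm{Pic}(X)/2 \to H^1(X(\R),\Z/2)$ is typically not injective for K3 surfaces (the source can have large rank while $X(\R)$ may even be empty or a sphere). So your proposed rank-count or case-by-case argument would fail, and is in any case unnecessary: drop that paragraph and you have the intended proof.
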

\begin{proof}
As $X(\R)$ is always orientable, the long exact cohomology sequence from $\Z \stackrel{\cdot 2}{\to} \Z \to \Z/2$ yields an epimorphism $H^1(X(\R),\Z) \to H^1(X(\R),\Z/2)$. Hence the claim follows from Lemma \ref{keylemmarefined}. 
\end{proof}  

The map $c^1_{BH}$ for real K3 surfaces has also been studied in great detail, see e.g. Mangolte \cite{Ma97} and \cite[sections 4.5.1 and 4.5.2]{Ma20}. Again, there are precise conditions for which $X$ this map is an epimorphisms and for which it isn't. For examples where we have surjectivity, see e.g. \cite[Examples 4.3.1]{CTS96}, and for examples where surjectivity fails see e.g. \cite[Example 4.5]{BW20}. More details on real K3 surfaces can also be found in \cite[Chapter VIII]{Si89}.

\medskip

\subsection{Abelian surfaces} For real abelian surfaces $X$, the classification can be found in \cite[Theorem 4.5.26]{Ma20}, and again $X(\R)$ is always oriented. We have the following very explicit result.

\begin{proposition}\label{abelian}
Let $X$ be a real abelian surface. Then if the real cycle class map  $c_{\R}^1:H^1(X,I^1) \to H^1(X(\R),\Z)$ is an epimorphism, the space $X(\R)$ is empty or a torus.
\end{proposition}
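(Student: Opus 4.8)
The plan is to run the same reduction as in Propositions~\ref{K3} and \ref{enriques}, and then to produce a non‑degenerate cup‑product obstruction by pushing the Borel--Haefliger classes forward along a well‑chosen isogeny.

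\emph{Reduction and setup.} By the classification of real abelian surfaces \cite[Theorem 4.5.26]{Ma20}, $X(\R)$ is either empty or a disjoint union of $2^m$ copies of the $2$‑torus $(\R/\Z)^2$ for some $0\le m\le 2$; in particular it is orientable and $H^2(X(\R),\Z)$ is torsion free, so $H^1(X(\R),\Z)\to H^1(X(\R),\Z/2)$ is onto. Hence Lemma~\ref{keylemmarefined} reduces the statement to: if $m\ge 1$ then the Borel--Haefliger map $c_{BH}^1\colon \Pic(X)/2\to H^1(X(\R),\Z/2)$ is not surjective. Assume $X(\R)\ne\emptyset$, fix a rational point as origin, and write $X(\R)\cong(\R/\Z)^2\times\pi$ with $\pi:=\pi_0(X(\R))\cong(\Z/2)^m$. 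Every element of the subgroup $\pi\subseteq X(\R)$ has order dividing $2$, so $\pi$ is a Galois‑fixed subgroup of $X[2](\C)$, hence the group of $\R$‑points of a constant étale subgroup scheme $H\subseteq X[2]$ over $\R$. Let $q\colon X\to Y:=X/H$ be the quotient isogeny; it is étale of degree $2^m$, and $q_{\R}\colon X(\R)\to Y(\R)$ is the trivial $2^m$‑sheeted covering $(\R/\Z)^2\times\pi\to(\R/\Z)^2$ onto a single torus component $Y(\R)^{0}$, since $\ker q_{\R}=H(\R)=\pi$.

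\emph{The obstruction.} Consider $q_{\R,*}\colon H^1(X(\R),\Z/2)\to H^1(Y(\R)^{0},\Z/2)$; for a trivial covering this is the ``sum over sheets'' transfer, hence surjective. Because $c_{\R}$, and so $c_{BH}$, commutes with proper push‑forward (section~\ref{recollections}; \cite{HWXZ}), one has $q_{\R,*}(c_{BH}^1(D))=c_{BH}^1(q_*D)$ for every $D\in\Pic(X)$, a class supported on $Y(\R)^{0}$. If $c_{BH}^1$ were surjective for $X$, then $q_{\R,*}\circ c_{BH}^1$ would be surjective onto $H^1(Y(\R)^{0},\Z/2)\cong(\Z/2)^2$. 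I will derive a contradiction by showing that this image is totally isotropic for the cup‑product pairing $H^1(Y(\R)^{0},\Z/2)\times H^1(Y(\R)^{0},\Z/2)\to H^2(Y(\R)^{0},\Z/2)=\Z/2$: this is the standard symplectic form on the $H^1$ of a $2$‑torus (alternating because every class there is the reduction of an integral one, non‑degenerate by Poincaré duality), whose totally isotropic subspaces have dimension at most $1$.

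\emph{Isotropy --- the main point.} One must show $c_{BH}^1(q_*D)\cup c_{BH}^1(q_*D')=0$ in $H^2(Y(\R)^{0},\Z/2)=\Z/2$ for all $D,D'\in\Pic(X)$. As both factors are supported on $Y(\R)^{0}$, this cup product is the $Y(\R)^{0}$‑component of $c_{BH}^2(q_*D\cdot q_*D')$. The projection formula gives $q_*D\cdot q_*D'=q_*\bigl(D\cdot q^*q_*D'\bigr)$, and for the quotient by $H$ one has $q^*q_*D'=\sum_{h\in H}t_h^*D'=2^mD'+P$ with $P:=\sum_{h\in H}(t_h^*D'-D')\in\Pic^0(X)$; hence $c_{BH}^2(q_*D\cdot q_*D')=2^m\,c_{BH}^2(q_*(D\cdot D'))+c_{BH}^2(q_*(D\cdot P))$. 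The first term is $0$ since $m\ge 1$. For the second, $z:=D\cdot P$ is a $0$‑cycle of degree $0$ on $X$ ($P$ being numerically trivial), and a short computation with residue‑field degrees for the finite map $q$ shows that the $Y(\R)^{0}$‑component of $c_{BH}^2(q_*z)$ equals $\operatorname{deg}(z)\bmod 2$: the real points of $q_*z$ lying in $Y(\R)^{0}$ are exactly the images of the real points of $z$, with the same total multiplicity, whereas a conjugate pair in $z$ whose image is real contributes an even multiplicity to $q_*z$ and hence nothing modulo $2$. Thus this component is $\operatorname{deg}(z)\bmod 2=0$, the cup product vanishes, and surjectivity is contradicted; so $m=0$, i.e.\ $X(\R)$ is empty or a torus.

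I expect the third step to be the only real work, the key move being that $q$ trades the inaccessible extra components of $X(\R)$ for a single torus: disconnectedness ($m\ge 1$) then surfaces as a factor $2^m$, which dies modulo $2$, plus a numerically trivial correction whose Borel--Haefliger contribution is controlled purely by a degree count. Note the argument is genuinely two‑dimensional (it uses that $H^2$ of the real torus is one‑dimensional, so the cup product is a single non‑degenerate form), and that for $m=0$ we have $q=\mathrm{id}$, the correction term disappears, and no contradiction is obtained --- as it must not be.
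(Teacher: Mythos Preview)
Your reduction step---using orientability of $X(\R)$ to get surjectivity of $H^1(X(\R),\Z)\to H^1(X(\R),\Z/2)$ and then invoking Lemma~\ref{keylemmarefined}---is exactly the paper's first move. The paper then finishes in one line by citing \cite[Proposition~4.5.27]{Ma20} for the fact that $c_{BH}^1$ is not surjective when $X(\R)$ has more than one component, whereas you prove this fact directly via the quotient isogeny $q\colon X\to X/H$ and a cup-product obstruction on the single torus $Y(\R)^0$.

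Your direct argument is correct. The identification of $\pi_0(X(\R))$ with a constant subgroup scheme $H\subseteq X[2]$ is fine since the chosen lift of $\pi$ sits inside $X[2](\R)$ and is therefore elementwise Galois-fixed; the image $q(X(\R))=Y(\R)^0$ follows from $\ker q_{\R}=H(\R)=\pi$. The projection formula together with $t_h^*D'-D'\in\Pic^0(X)$ (theorem of the square) gives $q_*D\cdot q_*D'=2^m q_*(D\cdot D')+q_*(D\cdot P)$ with $P$ numerically trivial, so the first summand dies mod~$2$ and the second has degree zero. Your residue-field bookkeeping for $c_{BH}^2(q_*z)$ on $Y(\R)^0$ is valid: every real point of $X$ lands in $Y(\R)^0$, while a closed point with residue field $\C$ contributes an even coefficient under $q_*$. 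Hence the image of $q_*\circ\Pic(X)$ in $H^1(Y(\R)^0,\Z/2)$ is totally isotropic for the non-degenerate pairing, so at most one-dimensional, contradicting surjectivity of $q_{\R,*}\circ c_{BH}^1$ when $m\ge 1$.

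In short, the paper outsources the key input to Mangolte's book, while you give a self-contained proof that also exhibits the underlying mechanism: the isogeny converts the extra components into a factor $2^m$ killed mod~$2$, and the leftover numerically trivial correction is controlled by a degree count. Your route is longer but more transparent; the paper's is quicker but relies on a black-box reference.
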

\begin{proof}
As $X(\R)$ is always orientable, we may argue as in the proof of Proposition \ref{K3}, using once more Lemma \ref{keylemmarefined}. Moreover, the behaviour of $c^1_{BH}$ on real abelian surfaces is well understood thanks to \cite[Proposition 4.5.27]{Ma20}. 
\end{proof}  

Note that we do not say that for all $X$ with $X(\R)$ a torus the map $c^1_{\R}$ really is surjective.

\medskip

Finally, for real bi-elliptic surfaces $X$ excluding the trivial case $X(\R)=\emptyset$, we know by \cite[Theorem 4.5.32]{Ma20} that $c^1_{BH}$ is an epimorphism if and only if $X(\R)$ is a torus and one of the additional conditions of loc. cit. is satisfied. Hence we obtain surjectivity of $c^1_{\R}$ in these cases arguing as in the other cases above. In general, the classification of real bi-elliptic surfaces due to Catanese and Frediani as recalled in \cite[Theorem 4.5.30]{Ma20} contains many non-oriented examples, and also many non-connected examples. We expect that at least in some of the non-connected examples surjectivity of $c^1_{\R}$ fails for reasons similar to Example \ref{nonconnectedcurves}.

\end{document}